\definecolor{webgreen}{rgb}{0,.5,0}
\definecolor{webbrown}{rgb}{.6,0,0}
\newcommand{\seqnum}[1]{\href{https://oeis.org/#1}{\underline{#1}}}
\begin{document}

\theoremstyle{plain}
\newtheorem{theorem}{Theorem}
\newtheorem{corollary}[theorem]{Corollary}
\newtheorem{lemma}[theorem]{Lemma}
\newtheorem{proposition}[theorem]{Proposition}

\theoremstyle{definition}
\newtheorem{definition}[theorem]{Definition}
\newtheorem{example}[theorem]{Example}
\newtheorem{conjecture}[theorem]{Conjecture}
\newtheorem{problem}[theorem]{Problem}

\theoremstyle{remark}
\newtheorem{remark}[theorem]{Remark}
\newtheorem{question}[theorem]{Question}

\begin{center}
\vskip 1cm{\LARGE\bf 
Run Distribution Over Flattened Partitions}
\vskip 1cm
\large
Olivia Nabawanda\footnote{Corresponding author.}\\
Department of Mathematics\\Makerere University\\
P.O.Box 7062\\
Kampala, Uganda\\
\href{mailto:onabawanda@must.ac.ug}{\tt onabawanda@must.ac.ug}\\
\ \\
Fanja Rakotondrajao\\
D\'epartement de  Math\'ematiques et Informatique\\
Universit\'e d'Antananarivo, Antananarivo\\
Antananarivo,  Madagascar\\
\href{mailto:frakoton@yahoo.fr}{\tt frakoton@yahoo.fr}\\
\ \\
Alex Samuel Bamunoba\\
Department of Mathematics\\Makerere University\\
P.O.Box 7062\\
Kampala, Uganda\\
\href{mailto:bamunoba@cns.mak.ac.ug}{\tt bamunoba@cns.mak.ac.ug}
\end{center}

\vskip .2 in

\begin{abstract}
The study of flattened partitions is an active area of current research. In this paper, our study unexpectedly leads us to the OEIS numbers \seqnum{A124324}. We provide a new combinatorial interpretation of these numbers. A combinatorial bijection between flattened partitions over $[n+1]$ and the partitions of $[n]$ is also given in a separate section. We introduce the numbers $f_{n, k}$ which count the number of flattened partitions over $[n]$ having $k$ runs. We give recurrence relations defining them, as well as their exponential generating function in differential form. It should be appreciated if its closed form is established. We extend the results to flattened partitions where the first $s$ integers belong to different runs. Combinatorial proofs are given.
\end{abstract}
\newcommand{\C}{\mathbb{C}}
\newcommand{\PP}{\mathrm{P}}
\newcommand{\B}{\mathbb{B}}
\newcommand{\Z}{\mathbb{Z}}
\newcommand{\N}{\mathbb{N}}
\newcommand{\R}{\mathbb{R}}
\newcommand{\Q}{\mathbb{Q}}
\newcommand{\fS}{\mathfrak{S}}
\newcommand{\Id}{\mathrm{Id}}
\newcommand{\cone}{\mathrm{cone}}
\newcommand{\conv}{\mathrm{conv}}
\newcommand{\height}{\mathrm{height}}
\newcommand{\im}{\mathrm{im}}
\newcommand{\HH}{\mathcal{H}}
\newcommand{\fpa}{\mathrm{FPA}}
\newcommand{\pn}{\mathrm{part}}
\newcommand{\RP}{\mathrm{RelPrime}}
\newcommand{\rp}{\mathrm{relprime}}
\newcommand{\rpac}{\mathrm{rpac}}
\newcommand{\relprime}{\mathrm{relprime}}
\newcommand{\ptn}{\mathrm{part}}
\newcommand{\Rpac}{\mathrm{Rpac}}
\newcommand{\Relprime}{\mathrm{Relprime}}
\newcommand{\Ptn}{\mathrm{Part}}
\def\v{{\boldsymbol v}}
\def\t{{\boldsymbol t}}
\newcommand{\A}{\mathcal{V}}
\newcommand{\m}{\mathfrak{m}}

\section{Introduction and preliminaries}\label{sec:intro}
The study of the different statistics of permutations such as descents, ascents, excedances and runs has a long history and has been an area of intensive research in the past years. A lot of work is available in the literature \cite{ehrenborg2000excedance, macmahon2001combinatory, mantaci2003exceedingly, sloane2008line, con1}. These statistics play an important role in combinatorics and mathematics as a whole.

For a given positive integer $n$, we will denote the set $\{1, 2, \ldots, n\}$  by $[n]$. A permutation $\sigma$ over $[n]$ will be represented as a word $\sigma(1)\sigma(2)\cdots\sigma(n)$. We say that $\sigma$ has an \textit{ascent (descent)} at position $i$ if $\sigma(i) < \sigma(i+1)\,(\sigma(i) > \sigma(i+1))$, where $i \in [n]$. A \textit{run} in a permutation $\sigma$ is a subword $\sigma(i)\sigma(i+1)\cdots \sigma(i+p)\sigma(i+p+1)$ where $i, i+1, \ldots, i+p$ are consecutive ascents. In this case, $i-1$ (if it does exist) and $i+p+1$ are non ascents, where $i \in [n]$. For example, in $\sigma = 526134$, we have ascents at positions $2, 4$ and $5$. It also has descents at positions $1$ and $3$. The last element $4$ of $\sigma$ at position $6$ is neither an ascent nor a descent.  A \textit{right to left minimum} of a permutation $\sigma$ is an element $\sigma(i)$ such that $\sigma(i) < \sigma(j)$ for all $j > i$. For example, in $\sigma = 1246357$, the\textit{ right to left minima} are $\{1, 2, 3, 5, 7\}$.

Counting permutations according to the number of runs has been studied from various perspectives in enumerative combinatorics. Canfield and Wilf \cite{canfield2008counting} considered a run as a subsequence of a permutation $\sigma$, whose values either increase on the interval (run up) or decrease on the interval (run down). More related work on permutation runs can also be found in articles \cite{bona2000combinatorial, ma2012explicit, ma2013enumeration, zhuang2016counting}. A permutation $\pi$ is said to be a \textit{flattened partition} if it consists of runs arranged from left to right such that their first entries are in increasing order. It is clear that the first run always starts with $1$, and so all flattened partitions start with the integer $1$. For example, consider a permutation $\sigma = 139278456$. This is a flattened partition with three runs namely: $139, 278, 456$ whose first entries $1, 2, 4$ are in increasing order. However, the permutation $\sigma = 139456287$ is not a flattened partition since the first entries $1, 4, 2, 7$, of the runs $139, 456, 28, 7$ are not in increasing order. 
Given a non-empty finite subset $S$ of positive integers, a \textit{set partition} $P$ of $S$ is a collection of disjoint non-empty subsets $B_{1},B_{2}, \ldots, B_{k}$ of $S$ (called blocks) such that $\displaystyle\cup_{i = 1}^{k}B_{i} = S$ \cite{mansour2012combinatoric, rota1964number}. We shall maintain the name and notion of ``flattened partition" introduced by Callan \cite{callan2009pattern}. Callan borrowed the notion ``\textit{flatten}" from \textit{$Mathematica^{\textregistered}$} programming language, where it acts by taking lists of sets arranged in increasing order, removes their parentheses, and writes them as a single list \cite{stephen1999mathematica}. Mansour et al.\ \cite{mansour2015counting} also used the same notion. To generate flattened partitions, the elements of each block are written as increasing subsequences, and blocks arranged from left to right in increasing order of their first entries.  Carlitz \cite{carlitz1968generalized} also applied the notion ``\textit{flatten}" to permutations expressed in cycle notation.

We will let $\mathcal{F}_{n}$ denote the set of all flattened partitions over $[n]$, $\mathcal{F}_{n, k}$ the set consisting of all flattened partitions over $[n]$ having $k$ runs, $f_{n, k}$ the cardinality of the set $\mathcal{F}_{n, k}$. In Table \ref{tab:t1}, we give the first few values of the numbers $f_{n, k}$.
\begin{table}[H]
\centering
\footnotesize
\begin{tabular}{|c| c c c c c|}
\hline
\backslashbox{$n$}{$k$} & $1$ & 2 & 3 & 4 & 5 \\
\hline
$1$ & 1 &  &  &   &   \\
2 & 1 &  &  &   &   \\
3 & 1 & 1 &  &   &   \\
4 & 1 & 4 & 0 &   &   \\
5 & 1 & 11 & 3 &  0 &   \\
6 & 1 & 26 & 25 & 0 & 0  \\
\hline
\end{tabular}
\caption{The numbers $f_{n, k}$}
\label{tab:t1}
\end{table}
We notice that the terms in the column for $k = 2$ of Table \ref{tab:t1} correspond to the Eulerian numbers. Foata and Sch\"{u}tzenberger \cite{olivia} gave the fundamental work on these numbers. Mantaci and Rakotondrajao \cite{mantaci} gave a new combinatorial interpretation to the same. Many other references concerning Eulerian numbers can be found on the OEIS \seqnum{A000295}.
In Section \ref{B}, we establish different recurrence relations of the numbers $f_{n, k}$ and give their combinatorial proofs. We also define the exponential generating function $F(x, u)$ of the numbers $f_{n, k}$ which is defined by \begin{equation*}
F(x, u) = \sum_{n \geq 0}\sum_{k\geq 0} f_{n, k} x^{k}\frac{u^{n}}{n!} = \sum_{n \geq 0}\sum_{\sigma \in \mathcal{F}_{n}}x^{run(\sigma)}\frac{u^{n}}{n!},
\end{equation*} where $run(\sigma)$ is the number of runs in a flattened partition $\sigma$. In Section \ref{C}, we generalize the results in Section \ref{B} to flattened partitions over $[n]$ whose first $s$ integers belong to different runs . We let $f_{n, k}^{(s)}$ denote the number of flattened partitions over $[n]$ whose first $s$ integers belong to different runs, and $F^{[s]}(x, u)$ the exponential generating function for the numbers $f_{n, k}^{(s)}$. The first few values of $f_{n, k}^{(s)}$ for $s = 2$ and $s = 3$ are shown in Table \ref{tab:t3} and \ref{tab:t4}:
\begin{table}[H]
\begin{minipage}[b]{0.45\linewidth}
\centering
\footnotesize
\begin{tabular}{|c| c c c c |}
\hline
\backslashbox{$n$}{$k$} & $2$ & 3 & 4 & 5 \\
\hline
$2$ & 0 &  &  &   \\
3 & 1 & 0 &  &   \\
4 & 3 & 0 &  &   \\
5 & 7 & 3 & 0 &   \\
6 & 15 & 22 & 0 &   \\
7 & 31 & 106 & 14 & 0 \\
\hline
\end{tabular}
\caption{The numbers $f_{n, k}^{(2)}$}
\label{tab:t3}
\end{minipage}
\begin{minipage}[b]{0.45\linewidth}
\centering
\footnotesize
\begin{tabular}{|c| c c c |}
\hline
\backslashbox{$n$}{$k$} & $3$ & 4 & 5 \\
\hline
4 & 0 &  &     \\
5 & 2 & 0 &    \\
6 & 12 & 0 &     \\
7 & 50 & 12 & 0    \\
8 & 180 & 139 & 0   \\
\hline
\end{tabular}
\caption{The numbers $f_{n, k}^{(3)}$}
\label{tab:t4}
\end{minipage}
\end{table}
Mansour et al.\ \cite{mansour2015counting} give a recursive formula for the number of flattened partitions over $[n]$, and also mention that the number of distinct permutations that can be obtained as flattened partitions over $[n]$ is the Bell
number $B_{n-1}$. Let $\mathcal{P}_{n}$ denote the collection of partitions of $[n]$. Our study led us to the OEIS \seqnum{A124324} which counts the number of partitions of $[n]$ having $k$ blocks of size greater than $1$. 

The authors of the present paper were introduced to the OEIS \seqnum{A124324} by Heinz \cite{fhh} who gave a maple program for computing the terms of this sequence recursively. The first maple program which computes this sequence using the exponential generating function was given by Emeric Deutsch. A combinatorial bijection between elements of $\mathcal{P}_{n}$ having $(k-1)$ blocks of size greater than $1$ and $\mathcal{F}_{n+1, k}$ will be given in Section \ref{A}.
\section{Flattened partitions and their behaviours on runs}\label{B} 
\subsection{Recurrence relations}
We have $f_{n, 1} = 1$ for all $n \geq 1$ and $f_{n, k} = 0$ for all $k\geq n\geq 2$. It is not possible to have a flattened partition over $[n]$ whose number of runs $k$ is greater or equal to its length.
\begin{theorem} \label{thm4}
For all integers $n$ and $k$ such that $2 \leq k < n$, the numbers $f_{n, k}$ of flattened partitions over $[n]$ with $k$ runs satisfy the recurrence relation \begin{equation}\label{mum1}
f_{n, k} = \sum_{m = 1}^{n-2}\bigg({n-1 \choose m} - 1\bigg)f_{m, k-1}.
\end{equation}
\end{theorem}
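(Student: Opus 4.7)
The plan is to prove the recurrence combinatorially by decomposing each $\sigma \in \mathcal{F}_{n,k}$ into its first run together with the remaining runs. Because $1$ is the minimum of $[n]$, for $n \geq 2$ the first run $R_1$ of $\sigma$ must contain $1$ and at least one larger element, so $R_1 = \{1\} \cup A$ with $A \subset \{2,3,\ldots,n\}$ and $|R_1| \geq 2$. Setting $m = n - |R_1|$, we have $m \in \{1,\ldots,n-2\}$, and the $m$ elements of $[n] \setminus R_1$, read in the order in which they appear after $R_1$ in $\sigma$, form a flattened partition $\tau$ of $[n] \setminus R_1$ with $k-1$ runs. After order-preserving relabelling to $[m]$, $\tau$ becomes an element of $\mathcal{F}_{m,k-1}$.

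Conversely, given $m \in \{1,\ldots,n-2\}$, an arbitrary subset $A \subset \{2,\ldots,n\}$ of size $n-m-1$, and an arbitrary $\tau \in \mathcal{F}_{m,k-1}$, one can attempt to rebuild $\sigma$ by writing $\{1\} \cup A$ in increasing order and then appending the lift of $\tau$ from $[m]$ to $[n] \setminus R_1$. The first-entries-of-runs-are-increasing condition is automatic, since $1$ is smaller than the first element of the lift of $\tau$, and within $\tau$ the first entries already increase by assumption; the intra-$\tau$ descents are preserved by the relabelling.

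The only condition that can fail is the descent between the end of $R_1$ and the start of the lift of $\tau$: this requires $\max(A) > \min([n] \setminus R_1)$, and fails precisely when $A$ equals the initial block $\{2,3,\ldots,n-m\}$. In that degenerate case $R_1 = \{1,2,\ldots,n-m\}$ sits entirely below $[n] \setminus R_1 = \{n-m+1,\ldots,n\}$, so the concatenation of $R_1$ with the first run of the lift of $\tau$ is itself an increasing sequence, collapsing two runs into one and yielding only $k-1$ runs overall. For every other choice of $A$, one checks directly that $A$ must contain some element exceeding $n-m$ while the complement $[n] \setminus R_1$ must contain some element at most $n-m$, guaranteeing the needed descent; the construction then yields a genuine flattened partition in $\mathcal{F}_{n,k}$.

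Thus for each admissible $m$ there are exactly $\binom{n-1}{n-m-1} - 1 = \binom{n-1}{m} - 1$ valid choices of $A$, each freely paired with any of the $f_{m,k-1}$ choices of $\tau$; summing over $m$ from $1$ to $n-2$ produces the stated recurrence. The main obstacle I foresee is the careful verification that the initial-block case is the only failure mode (both the ``increasing first entries'' and ``descent between runs'' conditions must be checked simultaneously), and confirming that $\sigma \mapsto (A,\tau)$ is well-defined, which ultimately reduces to the fact that $R_1$ is intrinsically recoverable from $\sigma$ as its first run, so that distinct admissible pairs $(A,\tau)$ are sent to distinct flattened partitions.
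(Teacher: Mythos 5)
Your proof is correct and takes essentially the same approach as the paper: decompose a flattened partition by its first run $R_1 = \{1\}\cup A$, observe that the remaining $m$ elements standardize to an element of $\mathcal{F}_{m,k-1}$, and count the $\binom{n-1}{n-m-1}-1$ admissible choices of $A$ by excluding the unique initial block $\{2,\ldots,n-m\}$ that would merge the first two runs. Your identification and verification of that single failure mode is in fact slightly more careful than the paper's constructive version of the same argument.
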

\begin{proof}  To construct a flattened partition $\pi$ over $[n]$ having $k$ runs, we consider a flattened partition $\tau$ over $[m]$ having $k-1$ runs, for an integer $m < n$. Since all flattened partitions start with element $1$, we insert the word of length $n-m$ starting with $1$, before a flattened partition $\tau$ to obtain $1\underbrace{\dots}_{n-m-1}\underbrace{\tau}_{m}$, and then re-order the elements of $\tau$ making sure the number of runs increase by one. The  $n-m-1$ elements between $1$ and $\tau$ can be chosen from the set $\{2, 3, 4, \ldots, n\}$  of $(n-1)$ terms in  $\displaystyle n-1 \choose n-m-1$ ways. In order to increase the number of runs by one, we avoid choosing the subset $\{2, 3, \ldots, n-m-1\}$ of consecutive elements.
We thus have $\displaystyle \bigg({n-1 \choose n-m-1} - 1\bigg)$ possible subsets to be inserted after $1$.  
The minimum number of elements in the first run is $2$, implying that the maximum length of $\tau$ is $n-2$. Since $n \geq 3$, then the minimum length of $\tau$ is $1$. Thus we have that $1 \leq m \leq n-2$. Thus summing up over $1 \leq m \leq n-2$ gives the recurrence relation in Equation \eqref{mum1}.
\end{proof} 
\begin{example}
Let us construct flattened partitions over $[6]$ having $3$ runs from a flattened partition $\tau$ over $[3]$ having $2$ runs. We have $\tau = 132$. 
The favorable subsets of two terms from the set $\{2, 3, 4, 5, 6\}$ are: $\{2, 4\}, \{2, 5\}, \{2, 6\}, \{3, 4\}, \{3, 5\}, \{3, 6\}, \{4, 5\}, \{4, 6\}, \{5, 6\}$.
Consider the pair $(\{3, 4\}, \tau)$, we get $\pi = 134265$.
For the pair $(\{4, 6\}, \tau)$, we get $\pi = 146253$.
\end{example}
Let $a_{n}$ denote the maximum number of runs $k$ in a flattened partition of $[n]$. From Table \ref{tab:t1} above, we see that the maximum number of runs, $a_{n}$ results into a sequence $1, 1, 2, 2, 3, 3, 4, 4, 5, 5, \ldots,$ for $n \geq 1$.
\begin{proposition}\label{pp}
The maximal number $a_{n}$ of runs in a flattened partition over $[n]$ satisfies the relation \begin{equation*}\label{ks}
a_{n} = a_{n-2} +1,
\end{equation*} for all $n\geq 2$, with initial conditions $a_{0} = 0, a_{1} = 1$.
\end{proposition}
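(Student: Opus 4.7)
The plan is to reduce this recurrence to an explicit closed form $a_n = \lceil n/2 \rceil$ by first proving a tight upper bound via a structural observation about runs in flattened partitions, and then exhibiting an explicit construction that attains this bound. Once the closed form is established, the recurrence $a_n = a_{n-2}+1$ follows because $\lceil n/2 \rceil - \lceil (n-2)/2 \rceil = 1$ for every $n \geq 2$, and the initial values $a_0=0$, $a_1=1$ match.

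The key structural lemma I would prove first is: in any flattened partition $\pi \in \mathcal{F}_{n}$ with runs $R_1, R_2, \ldots, R_k$, every run except possibly the last has length at least $2$. The argument is short: suppose some intermediate run $R_i$ (with $i < k$) consists of a single element $x$. Since $R_i$ and $R_{i+1}$ are separated by a descent, the first entry of $R_{i+1}$ is strictly less than $x$; but the defining property of a flattened partition requires that first entries of runs form an increasing sequence, so the first entry of $R_{i+1}$ must be strictly greater than the first entry of $R_i$, which is $x$. This contradiction rules out a singleton in any position other than the last. Counting elements then forces $n \geq 2(k-1)+1$, giving $k \leq \lceil n/2 \rceil$, and hence $a_n \leq \lceil n/2 \rceil$.

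For the matching lower bound, I would supply an explicit family of flattened partitions realizing $\lceil n/2 \rceil$ runs. For $n=2m$ the permutation
\begin{equation*}
1(m{+}1)\,|\,2(m{+}2)\,|\,\cdots\,|\,m(2m)
\end{equation*}
has $m$ runs of length $2$, with first entries $1,2,\ldots,m$ increasing and descents $m{+}i > i{+}1$ between consecutive runs. For $n=2m+1$, the permutation
\begin{equation*}
1(m{+}2)\,|\,2(m{+}3)\,|\,\cdots\,|\,m(2m{+}1)\,|\,(m{+}1)
\end{equation*}
has $m+1$ runs, the last one being the singleton $(m{+}1)$, with all required ascent/descent conditions easily verified. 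These constructions show $a_n \geq \lceil n/2 \rceil$.

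Combining the two bounds yields $a_n = \lceil n/2 \rceil$, from which the recurrence $a_n = a_{n-2}+1$ is an immediate arithmetic check, and the initial conditions $a_0=0$, $a_1=1$ are consistent. I expect the main obstacle to be the clean formulation of the structural lemma on singleton runs, since one must carefully handle the boundary case $i=1$ (where a putative singleton run would have to equal $\{1\}$, and the flattened-partition condition still forces a contradiction because the descent between $R_1$ and $R_2$ would require an entry smaller than $1$); once this lemma is in hand, the remaining steps are routine.
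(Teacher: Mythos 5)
Your proof is correct, but it takes a genuinely different route from the paper. The paper derives both inequalities $a_n \geq a_{n-2}+1$ and $a_n \leq a_{n-2}+1$ directly from the insertion construction of Theorem \ref{thm4}: a maximal flattened partition over $[n-2]$ gains exactly one run when a two-element first run $1x$ is prepended, and conversely the first run of any flattened partition over $[n]$ absorbs at least two letters, so stripping it off lands in $[m]$ with $m \leq n-2$ and one fewer run. You instead bypass the recursive construction entirely: your structural lemma (no singleton run except possibly the last, since a singleton intermediate run $R_i = x$ would force the first entry of $R_{i+1}$ to be simultaneously greater than $x$ by the flattened condition and less than $x$ by the descent) gives the upper bound $k \leq \lceil n/2 \rceil$ by pure counting, and your explicit extremal permutations $1(m{+}1)\,2(m{+}2)\cdots m(2m)$ and its odd-length variant give the matching lower bound. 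Your approach has the advantage of producing the closed form $a_n = \lceil n/2 \rceil$ immediately --- which is exactly the content of the corollary the paper states afterwards, since $\tfrac{1}{4}\bigl(2n + (-1)^{n+1} + 1\bigr) = \lceil n/2 \rceil$ --- whereas the paper's argument stays at the level of the recurrence and is arguably less transparent (its upper-bound step in particular is terse). The paper's approach, on the other hand, reuses machinery already set up for Theorem \ref{thm4} and so fits more organically into the surrounding development. Both arguments are sound; yours is the more self-contained and the singleton-run lemma is a nice observation worth stating explicitly.
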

\begin{proof}
Let $n$ be an integer such that $n \geq 2$. Consider a flattened partition $\tau$ over $[n-2]$ having maximal number of runs. Using the construction in Theorem \ref{thm4}, inserting two elements $1x$ where $x = \{3, 4, \ldots, n\}$ before $\tau$ and re-ordering the elements of $\tau$ can only add a maximum of $1$ run. Hence we have $a_{n} \geq a_{n-2} + 1$. On the other hand, inserting $n-2$ elements between $1$ and the identity $\tau = 1$ from the construction in the same theorem, we have $1\underbrace{\cdots}_{n-2}1$ and then re-ordering the elements of $\tau$. This means the subsets between $1$ and $\tau$ can be chosen from the set $\{3, 4, \ldots, n\}$ of $n-2$ elements whose maximum number of runs is $a_{n-2}$. Hence we have $a_{n} \leq a_{n-2}+1$. These two inequalities, together with $a_{0} = 0$ and $a_{1} = 1$, yield $a_{n} = a_{n-2}+1$.
\end{proof}
As an obvious conclusion, we have the following corollary:
\begin{corollary}
The maximal number of runs $a_{n}$ in a flattened partition over $[n]$ has the closed form $\displaystyle \dfrac{1}{4}\big(2n + (-1)^{n+1} +1\big)$ and its generating function $\displaystyle Y(x) = \sum_{n=0}^{\infty}a_{n}x^{n}$ is given by $\dfrac{1}{(1-x)^{2}(1+x)}$.
\end{corollary}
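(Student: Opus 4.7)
The plan is to exploit the fact that the recurrence $a_n = a_{n-2} + 1$ of Proposition~\ref{pp} decouples into two independent first-order recurrences, one on even indices and one on odd indices. Starting from $a_0 = 0$, a one-line induction gives $a_{2m} = m$ for all $m \geq 0$; starting from $a_1 = 1$, the same induction gives $a_{2m+1} = m+1$ for all $m \geq 0$. Equivalently, $a_n = \lceil n/2 \rceil$, which matches the initial segment $0, 1, 1, 2, 2, 3, 3, \ldots$ read off from Table~\ref{tab:t1}.

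To merge the two parities into the single expression asserted in the corollary, I would introduce the parity sign $(-1)^{n+1}$ and verify by cases: when $n$ is even, $(-1)^{n+1} = -1$ and $\tfrac{1}{4}\bigl(2n + (-1)^{n+1} + 1\bigr) = n/2$, while when $n$ is odd, $(-1)^{n+1} = 1$ and $\tfrac{1}{4}\bigl(2n + (-1)^{n+1} + 1\bigr) = (n+1)/2$. Both outputs agree with $a_n$, giving the closed form.

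For the generating function, I would multiply the recurrence $a_n - a_{n-2} = 1$ (valid for $n \geq 2$) by $x^n$, sum over $n \geq 2$, and use $\sum_{n\geq 2} x^n = x^2/(1-x)$ together with $a_0 = 0$, $a_1 = 1$. This yields $(1-x^2)Y(x) - x = x^2/(1-x)$, which simplifies to $(1-x^2)Y(x) = x/(1-x)$, and hence $Y(x) = x/\bigl((1-x)^2(1+x)\bigr)$. An alternative route that bypasses the recurrence is to split the closed form as $a_n = \tfrac{2n+1}{4} + \tfrac{(-1)^{n+1}}{4}$, use the standard identities $\sum_{n\geq 0}(2n+1)x^n = (1+x)/(1-x)^2$ and $\sum_{n\geq 0}(-1)^{n+1}x^n = -1/(1+x)$, and combine over a common denominator. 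Nothing in this argument is delicate; the only small point of care is to peel off $a_0$ and $a_1$ correctly when converting the recurrence (which only holds for $n \geq 2$) into the functional equation for $Y(x)$, and to read off the factor of $x$ in the numerator consistent with the initial condition $a_0 = 0$.
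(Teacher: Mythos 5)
Your argument is sound and is essentially the only route available: the paper offers no proof at all, dismissing this corollary as ``an obvious conclusion'' of Proposition \ref{pp}, so splitting the recurrence $a_n=a_{n-2}+1$ by parity, checking the closed form $\frac14\bigl(2n+(-1)^{n+1}+1\bigr)=\lceil n/2\rceil$ case by case, and converting the recurrence into a functional equation for $Y(x)$ is exactly what is needed, and you execute all three steps correctly.

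The one thing you should not pass over silently is that your (correct) computation contradicts the statement as printed. With the paper's own normalization $Y(x)=\sum_{n\ge 0}a_nx^n$ and $a_0=0$, $a_1=1$, your functional equation $(1-x^2)Y(x)=x/(1-x)$ gives
\[
Y(x)=\frac{x}{(1-x)^{2}(1+x)},
\]
not $\dfrac{1}{(1-x)^{2}(1+x)}$. Indeed $\frac{1}{(1-x)^2(1+x)}=1+x+2x^2+2x^3+3x^4+\cdots$, whose coefficient of $x^n$ is $a_{n+1}$, so the corollary's displayed generating function is off by a factor of $x$ (it generates the shifted sequence). Your alternative route confirms this: $\frac{1}{4}\bigl(\frac{1+x}{(1-x)^2}-\frac{1}{1+x}\bigr)=\frac{x}{(1-x)^2(1+x)}$. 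In short, the closed form is verified as stated, but for the generating function you have actually proved a corrected version of the claim; you should flag that discrepancy explicitly rather than quietly ending with a formula different from the one you set out to prove.
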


If $a$ is a starting point of a run in a flattened partition $\pi$ and $x$ is an integer such that $x < a$, then $\pi^{-1}(x) < \pi^{-1}(a)$. In other words, all integers smaller than $a$ are on its left. By contradiction, suppose there exists an $x < a$ on the right of $a$. Then $x$ is an element of another run. This makes the starting points of the runs of $\pi$ not to be in increasing order, which contradicts $\pi$ being a flattened partition. Hence $x$ should be on the left of $a$.

Let $\mathcal{C}_{n, k}$ denote the set of flattened partitions over $[n]$ of the form \textit{$1X2\cdots $}, having $k$ runs, where $X \in \{3, 4, 5, \ldots, n\}$. More precisely, each partition in $\mathcal{C}_{n, k}$ has only two elements in the first run. For example, we have $\mathcal{C}_{5, 2} = \{13245, 14235, 15234\}$.

From the construction in Theorem \ref{thm4}, we have the following corollary:
\begin{corollary}\label{sss}
For all integers $n$ and $k$ such that $1 \leq k < n$, the cardinality of the set $\mathcal{C}_{n, k}$ is $(n-2)f_{n-2, k-1}$.
\end{corollary}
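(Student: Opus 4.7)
The plan is to specialize the construction from the proof of Theorem \ref{thm4} to the case $m = n - 2$, which is exactly the configuration producing flattened partitions whose first run has length two. In that construction, a member of $\mathcal{F}_{n,k}$ is built from $\tau \in \mathcal{F}_{m, k-1}$ by inserting $n-m-1$ letters after the leading $1$ and then appending the order-preserving relabeling of $\tau$ on the remaining letters. Since an element of $\mathcal{C}_{n,k}$ has exactly two letters in its first run, exactly one letter must be inserted, forcing $m = n-2$.

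For $m = n-2$ the recipe of Theorem \ref{thm4} inserts a single letter $X$ chosen from $\{2, \ldots, n\}$, and the forbidden choice is $X = 2$: inserting $2$ would create the ascending prefix $1,2,3,\ldots$ with the relabeled $\tau$ (whose first letter would then be $3$) and would collapse two runs into one. This leaves the $n-2$ admissible values $X \in \{3, \ldots, n\}$. For each such $X$, the residual alphabet $\{2, \ldots, n\} \setminus \{X\}$ has $2$ as its least element, so the order-preserving relabeling sends any $\tau \in \mathcal{F}_{n-2, k-1}$ to a word beginning with $2$; combined with the descent $X > 2$ this confirms that the first run of the resulting $\pi$ is exactly $\{1, X\}$, the second run starts with $2$, and $\pi$ indeed has the prescribed form $1X2\cdots$. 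Multiplying these independent choices yields $(n-2)\,f_{n-2, k-1}$, and the assignment $\pi \mapsto (X, \tau)$ is manifestly invertible (read $X$ as the second letter, delete $1X$, and inverse-relabel the remainder).

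The only bookkeeping to monitor carefully is that the number of runs of $\pi$ comes out to $k$ and not $k\pm 1$; this reduces to observing that the juncture between $1X$ and the relabeled $\tau$ is a genuine descent (since $X \geq 3 > 2$), so no runs are merged or split across that boundary and the run count increases by exactly one from $k-1$ to $k$.
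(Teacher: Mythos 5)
Your proof is correct and takes essentially the same route as the paper: the paper derives this corollary directly from the construction in Theorem \ref{thm4}, specialized to $m=n-2$ (one inserted letter, $\binom{n-1}{1}-1=n-2$ admissible choices since $X=2$ is forbidden). Your additional checks that the juncture $X>2$ is a descent, that the relabeled $\tau$ begins with $2$, and that the map $\pi\mapsto(X,\tau)$ is invertible make explicit what the paper leaves implicit.
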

Let $\mathcal{K}_{n, k}$ denote the set of flattened partitions over $[n]$ having $k$ runs and containing either the subword $\mathbf{2n1}$ or  the integer $\mathbf{n}$ at the end. In other words, deleting the integer $\textbf{n}$ does not affect the number of runs. Let $\mathcal{L}_{n, k}$ denote the set of flattened partitions over $[n]$ having $k$ runs and containing the subword $\mathbf{1n2}$, the set where deleting the integer $\textbf{n}$ reduces the number of runs by $1$. For example, we have $\mathcal{L}_{5, 2} = \{12354, 12534, 15234\}$ and $\mathcal{K}_{5, 2} = \{14523, 14235, 13524, 13452, 13425, 13245, 12453, 12435\}$.
\begin{remark}\label{rmk}
The sets $\mathcal{K}_{n, k}$ and $\mathcal{L}_{n, k}$ are mutually exclusive, i.e., $\mathcal{K}_{n, k} \cap \mathcal{L}_{n, k} = \emptyset$ and form a partition of the set $\mathcal{F}_{n, k}$, i.e., $\mathcal{F}_{n, k} = \mathcal{K}_{n, k} \cup \mathcal{L}_{n, k}$.
\end{remark}

For all integers $n$ and $k$ such that $1 \leq k <n$, the cardinalities of the sets $\mathcal{C}_{n, k}$ and $\mathcal{L}_{n, k}$ are equal. We will use a variation of the bijection defined by Beyene and Mantaci \cite[p.\ 5]{fr} to construct a combinatorial bijection between the two sets.

Let us consider the map $f : \mathcal{C}_{n, k} \rightarrow \mathcal{L}_{n, k}$ defined as follows: for each $\pi \in \mathcal{C}_{n, k}$, delete the subword $1X$ and let $\tau$ be the standardized form of the remaining elements. Insert the subword $n(X-1)$ after the smallest rightmost element to $(X-1)$ in $\tau$ and re- order to get $\sigma = f(\pi) \in \mathcal{L}_{n, k}$.
\begin{example}
Let us illustrate the map $f$ with $n = 5$ and $k = 2$. We have $C_{5, 2} = \{13245, 14235, 15234\}$.
For $\pi = 13245$, then $\tau = 123$ and $\sigma = 15234$ i.e., $f(13245) = 15234$.
Similarly, $f(14235) = 12534$, $f(15234) = 12354$.
\end{example}
\begin{proposition}\label{hhh}
The map $f  : \mathcal{C}_{n, k} \rightarrow \mathcal{L}_{n, k}$ is a bijection.
\end{proposition}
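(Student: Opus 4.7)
The plan is to establish the bijectivity of $f$ by realizing both $\mathcal{C}_{n,k}$ and $\mathcal{L}_{n,k}$ as bijective images of the common parameter space $\Omega := \{(X, \tau) : X \in \{3, \ldots, n\},\ \tau \in \mathcal{F}_{n-2, k-1}\}$, and then identifying $f$ with the composite of these two parametrizations. Writing $\Phi_{\mathcal{C}} : \mathcal{C}_{n,k} \to \Omega$ and $\Phi_{\mathcal{L}} : \mathcal{L}_{n,k} \to \Omega$ for the two maps, the goal reduces to verifying that $f = \Phi_{\mathcal{L}}^{-1} \circ \Phi_{\mathcal{C}}$.

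For $\Phi_{\mathcal{C}}$, I would send $\pi \in \mathcal{C}_{n,k}$ to $(\pi(2),\, \mathrm{std}(\pi(3)\pi(4)\cdots\pi(n)))$, with inverse $(X, \tau) \mapsto 1\,X\,\widetilde{\tau}$, where $\widetilde{\tau}$ is the order-preserving relabelling of $\tau$ onto $[n] \setminus \{1, X\}$. This is a bijection because the smallest element of $[n] \setminus \{1, X\}$ is $2 < X$, so a descent appears at position $2$ and the first run is exactly $\{1, X\}$; the remaining runs come from $\widetilde{\tau}$, whose starts are increasing and all exceed $2$. For $\Phi_{\mathcal{L}}$, given $\sigma \in \mathcal{L}_{n,k}$, the requirement that deleting $n$ decreases the run count by one forces $n$ to occupy an internal position $i$ with $\sigma(i-1) < \sigma(i+1) =: Y$. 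Setting $X := Y+1$ and removing both $n$ and $Y$ from $\sigma$ then yields a word whose standardization $\tau$ lies in $\mathcal{F}_{n-2, k-1}$: removing $n$ drops the run count to $k-1$, and the subsequent removal of $Y$ preserves the count because the entry immediately after $Y$ (if any) exceeds $Y$ and hence still forms an ascent with $\sigma(i-1)$. The inverse of $\Phi_{\mathcal{L}}$ sends $(X, \tau)$ to the word obtained from $\widetilde{\tau}$ (the relabelling of $\tau$ onto $[n] \setminus \{n, X-1\}$) by inserting the block $n\,(X-1)$ immediately after the rightmost position $p$ in $\tau$ with $\tau(p) \leq X-2$ (which exists because $\tau(1) = 1 \leq X-2$).

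The principal obstacle is verifying that this last construction indeed lands in $\mathcal{L}_{n,k}$ with exactly $k$ runs. By the rightmost choice of $p$, every entry of $\tau$ at position $> p$ satisfies $\tau \geq X-1$, so after un-standardization $\widetilde{\tau}(p+1) \geq X > X-1$; consequently the inserted $X-1$ fits strictly between the surrounding run starts in the increasing sequence, creating a new run starting with $X-1$ and raising the count from $k-1$ to $k$. Upon deleting $n$ from the result, the ascent $\widetilde{\tau}(p) < X-1$ is reinstated, remerging the two affected runs and lowering the count back to $k-1$, so the resulting word belongs to $\mathcal{L}_{n,k}$. Once $\Phi_{\mathcal{C}}$ and $\Phi_{\mathcal{L}}$ are established as bijections onto $\Omega$, a step-by-step inspection identifies $f$ with $\Phi_{\mathcal{L}}^{-1} \circ \Phi_{\mathcal{C}}$, completing the proof.
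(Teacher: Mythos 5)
Your proof is correct and follows essentially the same route as the paper's: the paper also establishes bijectivity by exhibiting the inverse map (delete the subword $nX$, standardize, prepend $1(X+1)$), which is exactly your $\Phi_{\mathcal{C}}^{-1}\circ\Phi_{\mathcal{L}}$, so your factorization through $\Omega$ is a repackaging of the same delete--standardize--reinsert idea. The added value of your write-up is that it supplies the well-definedness checks the paper dismisses as obvious, namely that the insertion position of $n(X-1)$ is forced by the rightmost-element-below-$X-1$ condition and that both constructions land in the stated codomains with the correct number of runs.
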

\begin{proof}
We will prove that $f$ is surjective and injective.
\begin{itemize}
\item [(i)]  \textbf{Surjectivity}. Let $g : \mathcal{L}_{n, k} \rightarrow \mathcal{C}_{n, k}$ be defined as follows: for each $\sigma \in \mathcal{L}_{n, k}$, delete the subword $nX$ and let $\tau$ be the standardized form of the remaining. Insert the subword $1(X+1)$ before $\tau$ and re-order to get $\pi \in \mathcal{C}_{n, k}$. It is obvious that $g$ is the inverse map of $f$.
\item[(ii)] \textbf{Injectivity}. Let $\pi_{1}$ and $\pi_{2}$ be two elements in $\mathcal{C}_{n, k}$ such that $f(\pi_{1}) = f(\pi_{2})$. Necessarily, $\tau_{1} = \tau_{2}$ and hence $\pi_{1} = \pi_{2}$.
\end{itemize}
\end{proof}
Let us illustrate the map $f^{-1}$ with $n = 5$ and $k = 2$. We have $\mathcal{L}_{n, k} = \{12354, 12534, 15234\}$.
For $\sigma = 12354$, then $\tau = 123$ and $\pi = 15234$ i.e., $f^{-1}(12354) = 15234$. Similarly, $f^{-1}(12534) = 14235$, $f^{-1}(15234) = 13245$.
\begin{theorem}\label{cvb}
For all integers $n$ and $k$ such that $1 \leq k < n$, the cardinality of the set $\mathcal{K}_{n, k}$ is $kf_{n-1, k}$.
\end{theorem}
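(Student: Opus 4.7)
The plan is to prove the formula via a $k$-to-$1$ correspondence: the deletion-of-$n$ map $\Phi : \mathcal{K}_{n,k} \to \mathcal{F}_{n-1,k}$ sends each $\pi$ to the word obtained by erasing the letter $n$, and every fiber of $\Phi$ will turn out to have exactly $k$ elements. The identity $|\mathcal{K}_{n,k}| = k f_{n-1,k}$ then follows by summing over $\tau \in \mathcal{F}_{n-1,k}$.

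First I would check that $\Phi$ is well-defined. Given $\pi \in \mathcal{K}_{n,k}$, the word obtained by erasing $n$ is by definition a permutation of $[n-1]$ with exactly $k$ runs. Since $n$ is the maximal letter, whichever letter precedes $n$ in $\pi$ is smaller, so $n$ lies at the end (and not at the start) of its run; hence the starting letters of the runs of $\Phi(\pi)$ coincide with those of $\pi$ and are still increasing, so $\Phi(\pi) \in \mathcal{F}_{n-1,k}$.

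For the fiber count, I would fix $\tau \in \mathcal{F}_{n-1,k}$ with runs $R_1, R_2, \ldots, R_k$ (left to right), and define $\pi_j$ to be the word obtained by inserting $n$ immediately after the last letter of $R_j$, for $j = 1, \ldots, k$. Because $n$ exceeds every letter of $R_j$, the block $R_j \cup \{n\}$ is still increasing; if $j<k$ the following letter (the start of $R_{j+1}$) is smaller than $n$, creating a descent, while if $j = k$ the letter $n$ ends the word. In both cases the run-starts of $\pi_j$ are exactly those of $\tau$, so $\pi_j \in \mathcal{F}_{n,k}$, and $\Phi(\pi_j) = \tau$ confirms $\pi_j \in \mathcal{K}_{n,k}$. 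The words $\pi_1, \ldots, \pi_k$ place $n$ at pairwise distinct positions and so are distinct.

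Finally I would verify surjectivity of $j \mapsto \pi_j$ onto the fiber: any $\pi \in \Phi^{-1}(\tau)$ has $n$ at the end of some run, say the $j$-th, and since deletion of $n$ preserves the number of runs, that run with $n$ removed is exactly $R_j$, whence $\pi = \pi_j$. Therefore $|\Phi^{-1}(\tau)| = k$ and $|\mathcal{K}_{n,k}| = k f_{n-1,k}$. The only point that requires care — and the main potential obstacle — is confirming that the flattened-partition condition (run-starts in increasing order) is preserved under both the insertion and the deletion of $n$; the key observation is that $n$ is never a run-start, so deleting or inserting it leaves the list of run-starts unchanged.
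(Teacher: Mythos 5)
Your proof is correct and takes essentially the same approach as the paper's: the paper also obtains $|\mathcal{K}_{n,k}| = kf_{n-1,k}$ by inserting $n$ at the end of each of the $k$ runs of a flattened partition in $\mathcal{F}_{n-1,k}$. Your write-up merely makes explicit, via the deletion map and its fibers, the verification that this construction is exhaustive and repetition-free, which the paper leaves implicit.
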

\begin{proof}
Let us construct the elements of  $\mathcal{K}_{n, k}$.
Let $\pi \in \mathcal{F}_{n-1, k}$. Inserting $n$ at the end of a run of $\pi$ does not change the number of runs. There are $k$ possibilities of such insertions. This generates $k$ flattened partitions in the set $\mathcal{K}_{n, k}$. Thus $|\mathcal{K}_{n, k}| = kf_{n-1, k}$.
\end{proof}
\begin{theorem}\label{comp}
For all integers $n$ and $k$ such that $1 \leq k < n$, $f_{n, k} = kf_{n-1, k} +(n-2)f_{n-2, k-1}$.
\end{theorem}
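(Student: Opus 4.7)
The plan is to combine the disjoint decomposition $\mathcal{F}_{n,k} = \mathcal{K}_{n,k} \sqcup \mathcal{L}_{n,k}$ from Remark \ref{rmk} with the two enumerations already established in the excerpt. Passing to cardinalities gives
\begin{equation*}
f_{n,k} = |\mathcal{K}_{n,k}| + |\mathcal{L}_{n,k}|,
\end{equation*}
so the whole statement reduces to recognising each summand.

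First I would invoke Theorem \ref{cvb} directly for the first term, which supplies $|\mathcal{K}_{n,k}| = k f_{n-1,k}$ by the insertion of $n$ at the end of any of the $k$ runs of an element of $\mathcal{F}_{n-1,k}$. For the second term, the idea is to route through the auxiliary family $\mathcal{C}_{n,k}$: Proposition \ref{hhh} exhibits the bijection $f \colon \mathcal{C}_{n,k} \to \mathcal{L}_{n,k}$, so $|\mathcal{L}_{n,k}| = |\mathcal{C}_{n,k}|$, and then Corollary \ref{sss} gives $|\mathcal{C}_{n,k}| = (n-2)f_{n-2,k-1}$. Adding these two expressions yields the claimed recurrence.

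Since every ingredient is already proved earlier in the paper, there is essentially no obstacle; the only thing worth mentioning explicitly in the write-up is that the two sets really do cover $\mathcal{F}_{n,k}$ disjointly, because the position of $n$ in any flattened partition is either at the end of its run (contributing to $\mathcal{K}_{n,k}$) or in the middle of a run (necessarily flanked by a smaller element on each side, which in a flattened partition forces the configuration $1n2$, contributing to $\mathcal{L}_{n,k}$). Once this observation is recorded, the proof is just the chain
\begin{equation*}
f_{n,k} = |\mathcal{K}_{n,k}| + |\mathcal{L}_{n,k}| = kf_{n-1,k} + |\mathcal{C}_{n,k}| = kf_{n-1,k} + (n-2)f_{n-2,k-1},
\end{equation*}
which is exactly the statement of Theorem \ref{comp}.
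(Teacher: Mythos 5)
Your proposal is correct and follows exactly the same route as the paper: decompose $\mathcal{F}_{n,k}$ via Remark \ref{rmk}, count $\mathcal{K}_{n,k}$ by Theorem \ref{cvb}, and count $\mathcal{L}_{n,k}$ via the bijection of Proposition \ref{hhh} together with Corollary \ref{sss}. The only quibble is your parenthetical gloss on the dichotomy: since $n$ is the maximum it always ends its run, and the real distinction is whether the neighbours of $n$ form an ascent (pattern $1n2$) or not after $n$ is deleted, but this does not affect the argument, which matches the paper's.
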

\begin{proof}
From Remark \ref{rmk}, we have that $|\mathcal{F}_{n, k}| = |\mathcal{K}_{n, k}| + |\mathcal{L}_{n, k}|$. Using Theorem \ref{cvb}, Proposition \ref{hhh} and Corollary \ref{sss}, we then have that
\begin{equation}\label{comp}
f_{n, k} = kf_{n-1, k} +(n-2)f_{n-2, k-1}.
\end{equation}
\end{proof}
\begin{corollary}\label{uu}
	For all integers $n$ and $k$ such that $1 \leq k < n$, we have \begin{equation*}
	kf_{n, k} = \sum_{m=1}^{n-2}\bigg(\binom{n}{m} - 1\bigg)f_{m, k-1}.
	\end{equation*} 
\end{corollary}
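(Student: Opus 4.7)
My plan is to prove the corollary purely algebraically by combining Theorem \ref{thm4} with Theorem \ref{comp} (Equation \eqref{comp}). Both are already proved in the preceding pages, so no new bijective construction is required.

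The key trick is to apply Theorem \ref{thm4} at $n+1$ rather than at $n$. With this shift the binomial coefficient $\binom{(n+1)-1}{m} = \binom{n}{m}$ appears, which is exactly what we want on the right-hand side. Concretely:
$$f_{n+1,k} = \sum_{m=1}^{n-1}\left(\binom{n}{m}-1\right) f_{m,k-1}.$$
Next, I peel off the largest term $m=n-1$, using $\binom{n}{n-1}-1 = n-1$, to obtain
$$f_{n+1,k} = \sum_{m=1}^{n-2}\left(\binom{n}{m}-1\right) f_{m,k-1} \; + \; (n-1)f_{n-1,k-1}.$$

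The second ingredient is Theorem \ref{comp} applied at index $n+1$, which reads
$$f_{n+1,k} = k f_{n,k} + (n-1) f_{n-1,k-1}.$$
Equating the two expressions for $f_{n+1,k}$ and cancelling the common term $(n-1) f_{n-1,k-1}$ yields exactly the desired identity $k f_{n,k} = \sum_{m=1}^{n-2}\left(\binom{n}{m}-1\right) f_{m,k-1}$.

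There is essentially no obstacle: once one spots that the recurrence of Theorem \ref{thm4} should be instantiated at $n+1$, the rest is a two-line cancellation. A combinatorial route exists in principle — one could split $k f_{n,k} = f_{n,k} + (k-1) f_{n,k}$, dispose of $f_{n,k}$ by Theorem \ref{thm4}, and then interpret $(k-1) f_{n,k} = \sum_{m=1}^{n-2}\binom{n-1}{m-1} f_{m,k-1}$ as a bijection from (flattened partition in $\mathcal{F}_{n,k}$, marked non-initial run) to (subset of $\{2,\dots,n\}$ of size $m-1$, element of $\mathcal{F}_{m,k-1}$) — but this is noticeably more intricate than the algebraic derivation, so I would present the short algebraic proof and only remark on the bijective reading.
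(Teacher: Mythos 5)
Your proof is correct and is exactly the argument the paper intends: its own proof of Corollary~\ref{uu} is the one-line remark ``considering Equation~\eqref{mum1} and Equation~\eqref{comp}, we deduce the result,'' and your instantiation of both at $n+1$ followed by cancellation of $(n-1)f_{n-1,k-1}$ is the unique sensible way to carry that out. The only (inherited) caveat is that Theorem~\ref{thm4} is stated for $2\le k<n$, so the $k=1$ case of the corollary is not actually covered by this deduction --- but that imprecision is already present in the paper's statement, not introduced by you.
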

\begin{proof}
Considering Equation \eqref{mum1} and Equation \eqref{comp}, we deduce the result.
\end{proof}

We still do not have a direct combinatorial proof of Corollary \ref{uu}, and it remains an open problem worth investigating.
\begin{theorem}\label{vv}
For all integers $n$ and $k$ such that $1 \leq k \leq n$, the numbers $f_{n+2, k}$ of flattened partitions over $[n+2]$ having $k$ runs satisfy the recurrence relation \begin{equation}\label{oli}
f_{n+2, k} = f_{n+1, k} + \sum_{i = 1}^{n}\binom{n}{i}f_{n+1-i, k-1}.
\end{equation}
\end{theorem}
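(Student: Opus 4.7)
The plan is to give a combinatorial proof of \eqref{oli} by partitioning $\mathcal{F}_{n+2,k}$ according to the value of the second entry $\pi(2)$. Since every flattened partition begins with $1$ and its first run is an increasing sequence, the only possibilities are $\pi(2)=2$ or $\pi(2)\ge 3$, and these two cases will correspond to the two summands on the right-hand side of \eqref{oli}.

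First I would treat the case $\pi(2)=2$. For $n\ge 1$, the third entry $\pi(3)$ exists and lies in $\{3,4,\ldots,n+2\}$, so the first run of $\pi$ has length at least $3$. Deleting the entry $2$ and standardizing the remaining word (subtracting $1$ from every entry greater than $2$) produces a word $\pi'$ over $[n+1]$ starting with $1$. Only the adjacency at the old position of $2$ is affected, and since $\pi(3)>2$ the ascent there is preserved; every other ascent and descent transfers verbatim. Hence $\pi'$ has the same number of runs $k$, its run starts remain increasing, and the assignment $\pi\mapsto\pi'$ is a bijection onto $\mathcal{F}_{n+1,k}$ (the inverse shifts entries $\ge 2$ up by one and reinserts $2$ at position $2$). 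This contributes $f_{n+1,k}$.

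Next I would handle $\pi(2)\ge 3$. Then $2$ cannot lie in the first run, and because the run starts of $\pi$ are in increasing order, $2$ must begin the second run. If the first run has length $i+1$ with $1\le i\le n$, write it as $1a_1\cdots a_i$, where $\{a_1,\ldots,a_i\}$ is an $i$-subset of $\{3,4,\ldots,n+2\}$; this gives $\binom{n}{i}$ choices. Deleting this first run leaves a word of length $n+1-i$ on $\{2\}\cup(\{3,\ldots,n+2\}\setminus\{a_1,\ldots,a_i\})$ that begins with $2$; standardizing it yields a flattened partition in $\mathcal{F}_{n+1-i,k-1}$. Conversely, given any $i$, any $i$-subset of $\{3,\ldots,n+2\}$, and any $\tau\in\mathcal{F}_{n+1-i,k-1}$, relabeling $\tau$ onto $\{2\}\cup(\{3,\ldots,n+2\}\setminus\{a_1,\ldots,a_i\})$ and prepending $1a_1\cdots a_i$ assembles a unique element of $\mathcal{F}_{n+2,k}$ with $\pi(2)\ge 3$; the descent ending the first run is automatic because $a_i\ge 3>2$ while the first entry of the relabeled $\tau$ is $2$. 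Summing over $i$ from $1$ to $n$ gives $\sum_{i=1}^{n}\binom{n}{i}f_{n+1-i,k-1}$.

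The main obstacle is checking in each case that the constructions preserve both the flattened-partition property and the run count. For $\pi(2)=2$ one must verify that removing $2$ from the middle of the first run neither merges nor splits runs, and that the ordering of run starts is maintained; for $\pi(2)\ge 3$ the key verification is that after deleting the first run the smallest remaining element is $2$ and sits at the start (so the standardization is indeed a flattened partition), together with the descent check $a_i>2$ that separates the prepended first run from the rest. Combining the counts from the two cases then yields \eqref{oli}.
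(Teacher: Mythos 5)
Your proposal is correct and follows essentially the same route as the paper: both split $\mathcal{F}_{n+2,k}$ according to whether $1$ and $2$ lie in the same run (equivalently $\pi(2)=2$ versus $\pi(2)\ge 3$), with the second case counted by choosing the $i$ elements of the first run from $\{3,\ldots,n+2\}$ in $\binom{n}{i}$ ways and standardizing the remainder to an element of $\mathcal{F}_{n+1-i,k-1}$. The only cosmetic difference is in the first case, where the paper deletes the leading $1$ and decrements, while you delete the $2$ and standardize; both yield the same bijection count $f_{n+1,k}$.
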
 \begin{proof}
To construct a flattened partition $\pi$ over $[n+2]$ for all $n \geq 0$ having $k$ runs, we use the property that the first two terms $1$ and $2$ of $\pi$ are either in the same run or in different runs. We consider these two possibilities to enumerate $f_{n+2, k}$.
\begin{enumerate}
\item If $1$ and $2$ are in the same run, by the construction in Theorem \ref{thm4}, $\pi$ is of the form $\pi = 1\underbrace{\tau}_{n+1}$, where $\tau$ is a subword of length $n+1$ whose starting integer is $2$. Deleting $1$ of $\pi$ and reducing each of the remaining terms by $1$ gives a flattened partition of length $n+1$, with the same number of runs $k$ i.e., $f_{n+1, k}$.
\item  If $1$ and $2$ are in different runs, suppose the first run has $(i+1)$ terms including the first term $1$. Using Theorem \ref{thm4}, the remaining $(i+1) - 1 = i$ terms in the first run can be chosen from the set $\{3, 4, \ldots, n+2\}$ of $(n+2)-2 = n$ terms. This is because the positions of $1$ and $2$ in $\pi$ are already known. There are $\displaystyle {n \choose i}$ ways to do this. The remaining $k-1$ runs have length $(n+2)-(i+1) = n+1-i$. Hence we have $f_{n+1-i, k-1}$ flattened partitions over $[n+1-i]$ with $k-1$ runs. Since the length of the first run including $1$ varies between $2$ and $n+1$, the number of flattened partitions over $[n+2]$ with $1$ and $2$ in different runs is given by $\displaystyle \sum_{i =1}^{n}{n \choose i}f_{n+1-i, k-1}$.
\end{enumerate}
Adding these two cases together gives \begin{equation*}
f_{n+2, k} = f_{n+1, k} + \sum_{i = 1}^{n}{n \choose i}f_{n+1-i, k-1}.
\end{equation*}
\end{proof} 
\subsection{Generating function}
\begin{theorem}\label{jk}
The exponential generating function $F(x, u)$ of the run distribution over flattened partitions has the closed differential form \begin{equation}\label{gg}
\frac{\partial F(x, u) }{\partial u} = x\exp(x(\exp(u) - 1)+ u(1-x)),
\end{equation} with initial condition $\dfrac{\partial F(x, 0)}{\partial u} = x$.
\end{theorem}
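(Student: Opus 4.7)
The plan is to turn the combinatorial recurrence from Theorem \ref{vv} into a first-order linear ODE for the function $G(x,u) := \partial F(x,u)/\partial u$, and then solve it by the classical trick of recognizing $G'/G$ as a derivative of $\log G$. First I would rewrite
\[
\frac{\partial^{2}F}{\partial u^{2}} = \sum_{n\geq 0}\sum_{k\geq 0} f_{n+2,k}\,x^{k}\frac{u^{n}}{n!}, \qquad \frac{\partial F}{\partial u} = \sum_{n\geq 0}\sum_{k\geq 0} f_{n+1,k}\,x^{k}\frac{u^{n}}{n!},
\]
so that substituting the recurrence $f_{n+2,k}=f_{n+1,k}+\sum_{i=1}^{n}\binom{n}{i}f_{n+1-i,k-1}$ of Theorem \ref{vv} lets me express $\partial^{2}F/\partial u^{2}$ termwise.

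The main obstacle I expect is handling the binomial convolution cleanly. The key step is to change variables $j=n-i$ inside the double sum, so that $\binom{n}{i}/n! = 1/(i!\,j!)$ factors the summand and
\[
\sum_{n\geq 1}\sum_{k}\sum_{i=1}^{n}\binom{n}{i}f_{n+1-i,k-1}\,x^{k}\frac{u^{n}}{n!}
 = \Bigl(\sum_{i\geq 1}\frac{u^{i}}{i!}\Bigr)\cdot x\cdot\Bigl(\sum_{j\geq 0}\sum_{\ell\geq 0}f_{j+1,\ell}\,x^{\ell}\frac{u^{j}}{j!}\Bigr),
\]
which is exactly $(e^{u}-1)\,x\,\partial F/\partial u$. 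Assembling both pieces yields the PDE
\[
\frac{\partial^{2}F}{\partial u^{2}} \;=\; \bigl(1 + x(e^{u}-1)\bigr)\,\frac{\partial F}{\partial u}.
\]

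Now I would view this as an ODE in $u$ with parameter $x$. Dividing by $\partial F/\partial u$ gives
\[
\frac{\partial}{\partial u}\log\!\Bigl(\frac{\partial F}{\partial u}\Bigr) \;=\; (1-x) + x e^{u},
\]
so integrating in $u$ produces $\log(\partial F/\partial u) = (1-x)u + x e^{u} + C(x)$. It remains to fix the ``constant'' $C(x)$ from the initial condition. Since all flattened partitions over $[1]$ consist of the single run $1$, one has $f_{1,1}=1$ and $f_{1,k}=0$ for $k\neq 1$, hence
\[
\frac{\partial F}{\partial u}\Big|_{u=0} \;=\; \sum_{k\geq 0}f_{1,k}\,x^{k} \;=\; x.
\]
Evaluating the solved form at $u=0$ forces $e^{C(x)} = x e^{-x}$, and inserting this back gives
\[
\frac{\partial F}{\partial u} \;=\; x\,\exp\!\bigl(x(e^{u}-1) + u(1-x)\bigr),
\]
which is exactly Equation \eqref{gg}. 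The only delicate bookkeeping is to make sure the index shifts in the triple sum align correctly; once that factorization is spotted, the rest reduces to separating variables in a first-order linear ODE.
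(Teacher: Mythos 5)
Your proposal is correct and follows essentially the same route as the paper: both start from the recurrence of Theorem \ref{vv}, factor the binomial convolution via the substitution $j=n-i$ to obtain $C=x(e^{u}-1)\,\partial F/\partial u$, arrive at the first-order linear ODE $\partial V/\partial u=V\bigl(1+x(e^{u}-1)\bigr)$ for $V=\partial F/\partial u$, and integrate using the initial condition $V(x,0)=x$. Your explicit log-derivative integration and determination of $C(x)$ simply spells out the step the paper compresses into ``solving for $V$.''
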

\begin{proof}
We have
\begin{equation*}
F(x, u) = \sum_{n \geq 1}\sum_{k\geq 1} f_{n, k} x^{k}\frac{u^{n}}{n!} = \sum_{n \geq 1}f_{n}(x)\frac{u^{n}}{n!} .
\end{equation*} where $f_{n}(x)$ is the polynomial defined by $\displaystyle \sum_{k\geq 1} f_{n, k} x^{k} = f_{n}(x)$.

 From Equation \eqref{oli}, multiplying by $\displaystyle x^k \frac{u^n}{n!}$ and summing over $k$ and $n$ gives
\begin{equation}\label{ooo}
\sum_{n\geq 1}\sum_{k \geq 1}f_{n+2, k}x^k \frac{u^n}{n!} = \sum_{n\geq 1}\sum_{k \geq 1}\bigg(f_{n+1, k} + \sum_{i = 1}^{n}{n \choose i}f_{n+1-i, k-1}\bigg)x^k \frac{u^n}{n!}.
\end{equation}
Equation \eqref{ooo} can be rewritten as $A = B+C$, where \begin{equation*}
A = \sum_{n\geq 1}\sum_{k \geq 1}f_{n+2, k}x^k \frac{u^n}{n!},
\end{equation*}
\begin{equation*}
B = \sum_{n\geq 1}\sum_{k \geq 1}f_{n+1, k}x^k \frac{u^n}{n!},
\end{equation*} and
\begin{equation*}
C = \sum_{n\geq 1}\sum_{k \geq 1}\bigg(\sum_{i = 1}^{n}{n \choose i}f_{n+1-i, k-1}x^k \frac{u^n}{n!}\bigg).
\end{equation*}
We have \begin{equation}A = \frac{\partial^2 F}{\partial u^2},\ B = \frac{\partial F}{\partial u}. \label{op} \end{equation}
Fixing $i$ and summing over $k$ in $C$ gives
\begin{equation}\label{no7}
C = \sum_{n\geq 1}\sum_{i = 1}^{n}{n \choose i}\bigg( \sum_{k \geq 1}f_{n+1-i, k-1}x^k\bigg)\frac{u^{n}}{n!}.
\end{equation}
Equation \eqref{no7} can be re-written as
\begin{equation}\label{no8}
C = x\sum_{n\geq 1}\sum_{i = 1}^{n}{n \choose i}\bigg( \sum_{k \geq 1}f_{n+1-i, k-1}x^{k-1}\bigg)\frac{u^{n}}{n!}.
\end{equation}
Expanding the binomial coefficient $\displaystyle
{n \choose i}$ in Equation \eqref{no8} and simplifying gives
\begin{align*}\label{no9}
C &= x\sum_{n\geq 1}\sum_{i = 1}^{n}\frac{u^i}{i!}\bigg( \sum_{k \geq 1}f_{(n-i)+1, k-1}x^{k-1}\frac{u^{(n-i)}}{(n-i)!}\bigg)\\
& = x\sum_{i = 1}^{\infty}\frac{u^i}{i!}\bigg(\sum_{n\geq i} \sum_{k \geq 1}f_{(n-i)+1, k-1}x^{k-1}\frac{u^{(n-i)}}{(n-i)!}\bigg)\\
& = x\sum_{i = 1}^{\infty}\frac{u^i}{i!}\bigg(\sum_{n \geq i} f_{(n-i)+1}(x)\frac{u^{n-i}}{(n-i)!}\bigg).
\end{align*}
We have
\begin{align*}
C &=  x\sum_{i = 1}^{\infty}\frac{u^i}{i!}\frac{\partial F(x, u)}{\partial u}\\
& = x(\exp(u) - 1)\frac{\partial F(x, u)}{\partial u}.
\end{align*}
Substituting Equation \eqref{op} and $C = x(\exp(u) - 1)\dfrac{\partial F(x, u)}{\partial u}$ into Equation \eqref{ooo} gives \begin{equation}\label{o}
\frac{\partial^2 F(x, u)}{\partial u^2} = \frac{\partial F(x, u)}{\partial u} + x(\exp(u) - 1)\frac{\partial F(x, u)}{\partial u}.
\end{equation}
Let $V = \dfrac{\partial F(x, u)}{\partial u}$. Then \begin{equation*}
\frac{\partial V}{\partial u} = \frac{\partial^2 F(x, u)}{\partial u^2}.
\end{equation*}  Substituting $V$ into Equation \eqref{o} gives
\begin{equation}\label{no11}
\frac{\partial V}{\partial u} = V\big(x(\exp(u) - 1)+1\big)
\end{equation} with initial condition $V(x, 0) = x$. Solving for $V$ in Equation \eqref{no11} gives \begin{equation*}
V =  \frac{\partial F(x, u)}{\partial u} =  x \exp(-x)\exp({(x(\exp(u) - u)+u)}).
\end{equation*}
\end{proof}
\section {Flattened partitions with the first $s$ terms in different runs}\label{C}
We recall that $f_{n, k}^{(s)}$ and $F^{[s]}(x, u)$ are the number of flattened partitions over $[n]$ whose first $s$ integers belong to different runs and the exponential generating function for the numbers $f_{n, k}^{(s)}$ respectively.
\begin{theorem}
For all integers $s, k$ and $n$ such that $1 \leq s \leq k < n$, the numbers $f_{n+s, k}^{(s)}$ satisfy the relation \begin{equation}\label{ppp}
f_{n+s, k}^{(s)} = \sum_{i_{1}, i_{2}, \ldots , i_{s} \geq 1}\binom{n}{i_{1}}\binom{n-i_{1}}{i_{2}}\cdots \binom{n-\sum_{j=1}^{s-1}i_{j}}{i_{s}} f_{n-\sum_{j=1}^{s}i_{j}, k-s}.
\end{equation}
\end{theorem}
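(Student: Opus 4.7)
The plan is to generalize Theorem~\ref{vv} by peeling off the first $s$ runs in order and recording the free data at each step. Since every $\pi \in \mathcal{F}^{(s)}_{n+s, k}$ has its first $s$ runs starting at $1, 2, \ldots, s$ by hypothesis, the only choice in shaping $R_1, \ldots, R_s$ lies in picking their non-starting entries from the $n$-element set $\{s+1, s+2, \ldots, n+s\}$.

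Writing $i_j := |R_j| - 1$ and $E_j := R_j \setminus \{j\}$ for $j=1,\ldots,s$, I would first pick $E_1$ in $\binom{n}{i_1}$ ways, then $E_2$ from the remaining in $\binom{n-i_1}{i_2}$ ways, and so on, producing the product $\binom{n}{i_1}\binom{n-i_1}{i_2}\cdots\binom{n-\sum_{j<s}i_j}{i_s}$; within each run the extras then sit in forced increasing order after $j$. The remaining $n - \sum_j i_j$ elements, in the order inherited from $\pi$, form the concatenation $R_{s+1}\cdots R_k$, which is a flattened partition of these leftovers with $k-s$ runs; after the order-preserving standardization to $[n - \sum_j i_j]$ the run structure is preserved, contributing the factor $f_{n - \sum_j i_j,\, k-s}$. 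The constraint $i_j \geq 1$ is forced because, if $R_j = \{j\}$, then the next entry (either $j+1 \leq s$, or some integer $> s$ when $j=s$) would form an ascent with $j$ and merge the two runs, contradicting $\pi \in \mathcal{F}^{(s)}$. Multiplying the counts and summing over all admissible tuples $(i_1, \ldots, i_s)$ gives the right-hand side.

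The main obstacle is the descent between $R_s$ and $R_{s+1}$, which requires $\max(R_s) > r_{s+1}$ where $r_{s+1}$ is the smallest leftover element; this is not immediate from the independent choices described above. The careful point in the proof will be to argue that this constraint is already encoded in the factor $f_{n - \sum_j i_j,\, k-s}$ via the flattened-partition structure on the standardized leftover. An alternative route is induction on $s$ with base case $s=2$ supplied by Theorem~\ref{vv}; the inductive step would peel off only the first run and invoke the hypothesis on the remaining portion, which lies in $\mathcal{F}^{(s-1)}$ after standardization, thereby absorbing the awkward boundary condition between $R_s$ and $R_{s+1}$ into the induction.
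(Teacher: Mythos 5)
Your construction is the same as the paper's own proof: choose the $i_j$ non-initial entries of each of the first $s$ runs by nested binomial coefficients, and let the standardized leftover contribute the factor $f_{n-\sum_j i_j,\,k-s}$. The obstacle you flag --- the required descent between $R_s$ and $R_{s+1}$, i.e., $\max(E_s)$ must exceed the smallest leftover element --- is a genuine gap, and it cannot be repaired in the way you hope: that condition is \emph{not} encoded in the factor $f_{n-\sum_j i_j,\,k-s}$, which only sees the order-isomorphism type of the leftover and knows nothing about how $\max(E_s)$ compares with it. Whenever $\max(E_s)$ is smaller than every leftover element, the word you build has $R_s$ and $R_{s+1}$ merging into one run, so the right-hand side counts words that are not flattened partitions with $k$ runs. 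Concretely, take $s=2$, $k=3$, $n=4$ (so partitions of $[6]$, satisfying $1\le s\le k<n$): the left side is $f^{(2)}_{6,3}=22$ by the paper's own Table \ref{tab:t3} (equivalently $f_{6,3}-f_{5,3}=25-3$), while the right side is
\[
\sum_{i_1,i_2\ge 1}\binom{4}{i_1}\binom{4-i_1}{i_2}f_{4-i_1-i_2,\,1}
=4\cdot 3\cdot f_{2,1}+4\cdot 3\cdot f_{1,1}+6\cdot 2\cdot f_{1,1}=36.
\]
So the identity itself fails; the paper's argument has exactly the same unaddressed junction, and no argument can close it. (When $s=k$ there is a second defect: the $s$-th run must absorb all remaining elements and may then be the singleton $\{s\}$, a case the constraint $i_s\ge 1$ excludes --- e.g., $13452$ is missed, giving $6$ instead of $f^{(2)}_{5,2}=7$.)

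Your fallback --- induction on $s$, peeling off only the first run --- is the sound route, but it does not produce the stated formula. Removing $R_1=1E_1$ works cleanly because that junction descent is automatic ($\max(E_1)\ge s+1>2$), and the standardized remainder is a flattened partition whose first $s-1$ integers lie in different runs; this yields
$f^{(s)}_{n+s,k}=\sum_{i_1\ge 1}\binom{n}{i_1}f^{(s-1)}_{(n-i_1)+(s-1),\,k-1}$,
which unrolls to a product of binomials against $f^{(1)}_{\cdot,\,k-s+1}=f_{\cdot,\,k-s+1}$ --- a different right-hand side from \eqref{ppp}, precisely because the last peeling step is where the non-automatic descent lives. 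In short, you located the fatal point correctly, but the proposal leaves it open, and the identity as stated is false.
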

\begin{proof}
Let $\pi$ be a flattened partition over $[n+s]$ having $k$ runs. Let $i_{1}+1, i_{2}+1, i_{3}+1, \ldots , i_{s}+1$ be the lengths of the $s$ first runs whose starting points are $1, 2, \ldots, s$ respectively. Since the first run, including $1$ has length $i_{1}+1$, we have $i_{1}$ terms to arrange out of the $(n+s)-s = n$ terms. There are ${n \choose i_{1}}$ ways. For the second run, it remains to arrange $i_{2}$ terms out of $n-i_{1}$ terms. There are $\displaystyle {n-i_{1} \choose i_{2}}$ ways. Repeating the same process up to the $s^{th}$ run inductively gives $\displaystyle {n-i_{1} - i_{2}- \cdots - i_{s-1} \choose i_{s}} = {n-\sum_{j=1}^{s-1}i_{j} \choose i_{s}}$ possibilities.
The remaining $k-s$ runs have length $(n+s)-((i_{1}+1) + (i_{2}+1) + \cdots (i_{s}+1)) = \displaystyle n- \sum_{j=1}^{s}i_{j}$. So we have $\displaystyle {n \choose i_{1}}{n-i_{1} \choose i_{2}}\cdots {n-\sum_{j=1}^{s-1}i_{j} \choose i_{s}} f_{n-\sum_{j=1}^{s}i_{j}, k-s}$. Summing over all possibilities of $i_{1}, i_{2}, \ldots , i_{s} \geq 1$ gives the result.
\end{proof}
\begin{theorem}
 The exponential generating function $F^{[s]}(x, u)$ for the numbers $f_{n, k}^{(s)}$ has the closed differential form \begin{equation}\label{ddw}
\frac{\partial^s F^{[s]}(x, u)}{\partial u^s} = (\displaystyle x(\exp(u) - 1))^{s} F(x, u).
\end{equation}
\end{theorem}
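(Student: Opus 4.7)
The plan is to convert the recurrence (\ref{ppp}) into an identity between formal power series. First I would rewrite the iterated product of binomial coefficients in (\ref{ppp}) as a single multinomial factor,
$$\binom{n}{i_1}\binom{n-i_1}{i_2}\cdots\binom{n-\sum_{j=1}^{s-1}i_j}{i_s} = \frac{n!}{i_1!\, i_2! \cdots i_s!\,\bigl(n - \sum_{j=1}^{s} i_j\bigr)!},$$
so that after dividing the recurrence through by $n!$ the right-hand side factors into a clean $s$-fold convolution of EGFs.

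Next, I would observe that differentiating $F^{[s]}(x,u)$ exactly $s$ times in $u$ shifts the index $n \mapsto n+s$:
$$\frac{\partial^s F^{[s]}(x,u)}{\partial u^s} = \sum_{n \geq 0}\sum_{k \geq s} f_{n+s,k}^{(s)}\, x^k \, \frac{u^n}{n!}.$$
Substituting the rewritten recurrence, then introducing $m = n - \sum_{j=1}^s i_j$ and $k' = k - s$ and interchanging the order of summation, the sum splits as a product: each of the $s$ independent indices $i_\ell \geq 1$ contributes a factor $\sum_{i \geq 1} u^i/i! = \exp(u) - 1$, an overall factor $x^s$ pulls out of $x^{k'+s}$, and the remaining double sum over $(m,k')$ is exactly $F(x,u)$. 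Assembling these factors yields $(x(\exp(u)-1))^s F(x,u)$, which is (\ref{ddw}).

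The argument is essentially bookkeeping with generating functions; the main points requiring care are that the lower limit $i_\ell \geq 1$ in (\ref{ppp}) produces $\exp(u)-1$ rather than $\exp(u)$, and that the index shift $k \mapsto k-s$ in the factor $f_{n-\sum i_j,\, k-s}$ is absorbed cleanly as the overall factor $x^s$ after re-indexing by $k'$. I do not anticipate any serious obstacle beyond making sure the bounds of summation (particularly the condition $\sum_{j} i_j \leq n$, which becomes automatic once $m \geq 0$ is introduced) are handled correctly.
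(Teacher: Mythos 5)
Your proposal is correct and follows essentially the same route as the paper: the paper likewise multiplies the recurrence \eqref{ppp} by $x^k u^n/n!$, expands the telescoping product of binomial coefficients into the multinomial factor $n!/\bigl(i_1!\cdots i_s!\,(n-\sum_j i_j)!\bigr)$, identifies the left side with $\partial^s F^{[s]}/\partial u^s$, and splits the right side into $s$ factors of $\sum_{i\geq 1}u^i/i!=\exp(u)-1$ times $x^s F(x,u)$. The points you flag for care (the lower limit $i_\ell\geq 1$ and the absorption of the shift $k\mapsto k-s$ into $x^s$) are exactly the steps the paper carries out.
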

\begin{proof}
We have 
\begin{equation*}	F^{[s]}(x, u) = \sum_{n=1}^{\infty}\sum_{k\geq 1} f_{n+s, k}^{(s)} x^{k}\frac{u^{n}}{n!}.
\end{equation*} 
 From Equation \eqref{ppp}, multiplying by $\displaystyle x^k \frac{u^n}{n!}$ and summing over $k$ and $n$ gives
\begin{equation}\label{nv}
\sum_{n\geq 1}\sum_{k \geq s}f_{n+s, k}^{(s)}x^k \frac{u^n}{n!} = \sum_{n\geq 1}\sum_{k \geq s}\bigg(\sum_{i_{1}, i_{2}, \ldots , i_{s} \geq 1}{n \choose i_{1}}\cdots {n-\sum_{j=1}^{s-1}i_{j} \choose i_{s}} f_{n-\sum_{j=1}^{s}i_{j}, k-s}\bigg)x^k \frac{u^n}{n!}
\end{equation}
Let \begin{equation*}
L = \sum_{n\geq 1}\sum_{k \geq s}f_{n+s, k}^{(s)}x^k \frac{u^n}{n!},
\end{equation*} and \begin{equation*}
M = \sum_{n\geq 1}\sum_{k \geq s}\bigg(\sum_{i_{1}, i_{2}, \ldots , i_{s} \geq 1}{n \choose i_{1}}{n-i_{1} \choose i_{2}}\cdots {n-\sum_{j=1}^{s-1}i_{j} \choose i_{s}} f_{n-\sum_{j=1}^{s}i_{j}, k-s}\bigg)x^k \frac{u^n}{n!}.
\end{equation*}
We have \begin{equation}\label{hj}L = \dfrac{\partial^{s}F^{[s]}(x, u)}{\partial u^{s}}. \end{equation} Fixing $i$ and summing over $k$ gives
\begin{align*}
M &= \sum_{n \geq 1}\sum_{i_{1}, i_{2}, \ldots , i_{s} \geq 1}\bigg(\sum_{k} x^s {n \choose i_{1}}{n-i_{1} \choose i_{2}}\cdots {n-\sum_{j=1}^{s-1}i_{j} \choose i_{s}}f_{n-\sum_{j=1}^{s}i_{j}, k-s} x^{k-s} \bigg)\frac{u^n}{n!}\\
& = x^{s}\sum_{i_{1}, i_{2}, \ldots , i_{s} = 1}^{\infty}{n \choose i_{1}}{n-i_{1} \choose i_{2}}\cdots {n-\sum_{j =1}^{s-1}i_{j} \choose i_{s}}\bigg(\sum_{n \geq i_{1}, i_{2}, \ldots , i_{s}}\sum_{k \geq s}f_{n-\sum_{j =1}^{s}i_{j}, k-s}x^{k-s}\frac{u^n}{n!}\bigg)\\
& = x^s \sum_{i_{1}, i_{2}, \ldots , i_{s} = 1}^{\infty}\frac{n!}{(n-i_{1})! i_{1}!}\cdots \frac{(n-\sum_{j=1}^{s-1}i_{j})!}{ (n-\sum_{j=1}^{s}i_{j})!i_{s}!} \bigg(\sum_{n \geq i_{1}, i_{2}, \ldots , i_{s}}\sum_{k \geq s}f_{n-\sum_{j=1}^{s}i_{j}, k-s}x^{k-s}\frac{u^n}{n!}\bigg)\\
& = x^s \sum_{i_{1}, i_{2}, \ldots , i_{s} = 1}^{\infty}\frac{u^{i_{1}}}{i_{1}!}\frac{u^{i_{2}}}{i_{2}!}\cdots \frac{u^{i_{s}}}{i_{s}!}\bigg(\sum_{n\geq i_{1}, i_{2}, \ldots , i_{s}}\sum_{k \geq s}f_{n-\sum_{j =1}^{s}i_{j}, k-s}x^{k-s}\frac{u^{n-\sum_{j=1}^{s}i_{j}}}{(n-\sum_{j=1}^{s}i_{j})!}\bigg)\\
& = (\displaystyle x(\exp(u) - 1))^{\normalsize s} F(x, u).
\end{align*}
Substituting Equation \eqref{hj} and $M = (\displaystyle x(\exp(u) - 1))^{\normalsize s} F(x, u)$ into Equation \eqref{nv} gives the result.
\end{proof}
\section{Bijection between flattened partitions over $[n+1]$ and partitions of $[n]$}\label{A}
Let $P \in \mathcal{P}_{n}$ be a partition of $[n]$, written as $P = B_{1}|B_{2}|\cdots|B_{k}$ where the elements in each block $B_{i}$ are written in increasing order. We will write $P$ as $P'$ in such a way that in each block, the smallest element appears at the end but still maintaining the remaining elements. We construct a word $P_{+}$ from $P'$ by deleting the marks $``| "$ enclosing the different blocks of $P'$ and then increasing all the terms by $1$.

Let $f : \mathcal{P}_{n} \rightarrow \mathcal{F}_{n+1}$ be a map which associates a partition $P$ of $[n]$ with a flattened partition defined by $f(P) = \sigma = 1P_{+}$. The map $f$ is well defined since if $P_{1}$ and $P_{2}$ are two set partitions over $[n]$ and assume that $P_{1} = P_{2}$, then $f(P_{1})  = 1P_{1+} = 1P_{2+} = f(P_{2})$.
\begin{proposition}
The map $f : \mathcal{P}_{n} \rightarrow \mathcal{F}_{n+1}$ is a bijection.
\end{proposition}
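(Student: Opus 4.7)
The plan is to establish that $f$ is a bijection by exhibiting an explicit inverse $g\colon \mathcal{F}_{n+1}\to\mathcal{P}_n$, verifying along the way that $f(P)$ does belong to $\mathcal{F}_{n+1}$. For $P=B_1|B_2|\cdots|B_k$ listed in increasing order of minima, I first locate the descents of the word $P_+$. A non-singleton block $B_i=\{a_1<\cdots<a_m\}$ contributes the rotation $a_2a_3\cdots a_ma_1$, which produces exactly one descent (namely $a_m>a_1$), while a singleton block contributes none. At the seam between $B_i$ and $B_{i+1}$, the adjacent pair is $a_1^{(i)}$ followed by the first letter of $B_{i+1}'$; this first letter is at least $a_1^{(i+1)}>a_1^{(i)}$ because blocks are indexed by increasing minima, so every seam is an ascent, as is the junction between the prepended $1$ and $P_+$. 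Consequently the runs of $\sigma=f(P)$ consist of the opening run starting at $1$ together with one new run per non-singleton block $B_i$, beginning at $a_1^{(i)}+1$; as $i$ ranges through the non-singleton block indices these starting values are strictly increasing, which is precisely the flattened condition.

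Next I define $g$: given $\sigma\in\mathcal{F}_{n+1}$, drop the leading $1$, subtract $1$ from each remaining entry to obtain a permutation $w$ of $[n]$, and iteratively cut at each prefix-minimum. More precisely, let $p_1$ be the position of $\min(w)$ in $w$, declare $B_1$ to be the underlying set of $w_1,\ldots,w_{p_1}$, strike those entries, and repeat the construction on the suffix to extract $B_2,B_3,\ldots$. The resulting blocks come out automatically listed in increasing order of their minima. The identity $g\circ f=\mathrm{id}$ is then immediate: in $P'=P_+-1$ the element $1$ sits at the end of $B_1'$, so the first cut recovers $B_1$; the minimum of the next suffix is $\min B_2$ for the same structural reason, and induction finishes the recovery.

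The real subtlety lies in $f\circ g=\mathrm{id}$, which amounts to showing that every chunk produced by $g$ already has the rotated-ascending shape $a_2a_3\cdots a_ma_1$ with $a_1<a_2<\cdots<a_m$; otherwise, the rotation built into $f$ would alter the word. I would prove this by induction on the chunk index, anchored by the key claim that at each stage the minimum $m$ of the current suffix of $w$, shifted up by $1$, marks the transition from one run of $\sigma$ to the next. The argument splits on whether the run $R_t$ containing $m+1$ lies entirely past the last consumed position or spans it: in the former case, the first element $c_t$ of $R_t$ is itself unconsumed and would satisfy $c_t<m+1$, contradicting the minimality of $m+1$; in the latter, the inequalities $\sigma_{\mathrm{anchor}+1}\le m+1$ (from the ascending run containing the anchor) and $\sigma_{\mathrm{anchor}+1}-1\ge m$ (since $\sigma_{\mathrm{anchor}+1}-1$ lies in the unconsumed set) force equality, so the new chunk is a singleton. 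Combined with the strict increase $c_1<c_2<\cdots<c_k$ of run-starts in a flattened partition, one obtains that $m+1$ is the very next unconsumed run-start, and the intervening positions all sit inside a single ascending run, yielding the required rotated-ascending shape. I expect this bookkeeping, and in particular the careful translation between positions in the current suffix of $w$ and positions in $\sigma$, to be the main obstacle; once the claim is pinned down the rest follows directly.
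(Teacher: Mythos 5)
Your proof is correct in outline, but it takes a genuinely different route from the paper. The paper's proof is a two-line cardinality argument: it invokes the known fact (from Mansour, Shattuck and Wagner) that $|\mathcal{F}_{n+1}| = B_n = |\mathcal{P}_n|$, so that for finite sets of equal size it suffices to check injectivity, which is read off from $1P_{1+} = 1P_{2+}$. You instead build the explicit inverse and verify both compositions, which makes the proof self-contained in two respects where the paper is silent: you check that $f(P)$ actually lands in $\mathcal{F}_{n+1}$ (by locating the descents of $P_+$ precisely at the end of each non-singleton block and observing that the resulting run-starts $1, \min B_i + 1$ increase), and you justify that $P$ is recoverable from the word $P_+$ --- the step the paper compresses into ``thus $P_1 = P_2$.'' The price is the bookkeeping in $f\circ g = \mathrm{id}$; your key claim there is sound, and the cleanest way to pin it down is the observation that in a flattened partition no unconsumed run-start can occur strictly before the position of the minimum of the unconsumed suffix (if a run starts at $c_s$ at such a position, the flattened property forces every element smaller than $c_s$ to its left, so the suffix minimum would have to equal $c_s$ and sit at that earlier position, a contradiction); this shows the chunk consists of a single ascending stretch followed by the drop to its minimum, i.e., exactly the rotated-ascending shape. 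One small slip: in your first case the first element $c_t$ of the run containing the suffix minimum satisfies $c_t \le m+1$ because runs ascend, and unconsumedness gives $c_t \ge m+1$, so equality is forced --- the inequality is not strict as written. As a bonus, your descent analysis of $f(P)$ essentially proves the run-counting theorem at the end of Section~\ref{A} for free, which the cardinality argument cannot do.
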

\begin{proof}
Since both $\mathcal{P}_{n}$ and $\mathcal{F}_{n+1}$ are finite with the same size $B_{n}$, it suffices to prove that $f$ is injective. Let $P_{1}$ and $P_{2}$ be two partitions of $[n]$. Assume that $f(P_{1}) = f(P_{2})$. Then by definition, $1P_{1+} = 1P_{2+}$. Since the map $f$ is well defined, the strings $P_{1+}$  and $P_{2+}$ are equal after deleting $1$ from the front. Thus $P_{1} = P_{2}$.
\end{proof}
\begin{example}\label{far}
Consider a partition $P = 12|3|45$ of $[5]$. Re-ordering $P$ such that in each block, the smallest entry appears at the end of the partition gives $P' = 21|3|54$. Then $P_{+} = 32465$. And thus $\sigma = 1P_{+} = 132465$.
\end{example} Let $\sigma \in \mathcal{F}_{n+1}$ having $k$ runs. Let us define $P= f^{-1}(\sigma)$ as follows: Insert the mark $``|"$ at the end of each right to left minimum of $\sigma$. Delete element $1$.  Re-order to get the blocks of the partition $P$.
\begin{example}\label{nol1}
Consider a flattened partition $\sigma = 132465$ over $[6]$. Inserting the mark $``|"$ at the end of each right to left minimum $1|32|4|65|$. Deleting $1$ and re-ordering the remaining gives the partition $P = 12|3|45$.
\end{example}
We now give a property of our bijection. This bijection preserves the number of blocks of size greater than $1$ in a partition and the number of runs of its corresponding flattened partition.
\begin{theorem}
For any integer $n \geq 0$, if $P$ is a partition over $[n]$ and $\sigma$ its corresponding flattened partition over $[n+1]$, then the following assertions are equivalent:
\begin{enumerate}
\item [(i)] the number of blocks of size greater than $1$ of the partition $P$ is $k-1$,
\item [(ii)] the number of runs of the flattened partition $\sigma$ is equal to $k$.
\end{enumerate}
\end{theorem}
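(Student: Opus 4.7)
The plan is to establish the equivalence by showing that the descents of $\sigma = f(P)$ are in one-to-one correspondence with the blocks of $P$ of size at least $2$, and then to invoke the fact that, since a flattened partition is the concatenation of maximal ascending runs, its number of runs exceeds its number of descents by exactly one.

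Write $P = B_1 \mid B_2 \mid \cdots \mid B_m$ with blocks ordered by their minima $b_{1,1} < b_{2,1} < \cdots < b_{m,1}$, and each $B_i = \{b_{i,1} < b_{i,2} < \cdots < b_{i,s_i}\}$. After the reordering in the definition of $f$, each block becomes the word $b_{i,2}\, b_{i,3} \cdots b_{i,s_i}\, b_{i,1}$; shifting all entries by $+1$ and prepending $1$ gives $\sigma = 1\,(B'_1)_{+} (B'_2)_{+} \cdots (B'_m)_{+}$. I would then classify every consecutive pair in $\sigma$ into three cases: \textbf{(a)} the initial pair $(1, \sigma(2))$ is always an ascent, since $\sigma(2) \geq 2$; \textbf{(b)} inside a shifted block $(B'_i)_{+}$, a singleton contributes no internal pair, while a block of size $s_i \geq 2$ contributes the ascending sequence $b_{i,2}+1, \ldots, b_{i,s_i}+1$ followed by a single descent down to $b_{i,1}+1$; and \textbf{(c)} between consecutive shifted blocks, the last entry $b_{i,1}+1$ of $(B'_i)_{+}$ is strictly less than the first entry of $(B'_{i+1})_{+}$, because that entry is at least $b_{i+1,1}+1 > b_{i,1}+1$ by the block ordering, so this transition is again an ascent.

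Thus each block of size at least $2$ contributes exactly one descent to $\sigma$, and no other descents occur. Combining this with the ``runs $=$ descents $+\,1$'' observation, the number of runs of $\sigma$ equals $1 + (\text{number of blocks of } P \text{ of size greater than } 1)$, which is exactly the desired equivalence between $k-1$ and $k$. The one subtle point is step \textbf{(c)}: it is essential that the blocks be listed in increasing order of their minima, since this is precisely what forces every inter-block transition to be an ascent rather than a descent; without this convention spurious descents would appear and the count would fail. The remainder of the argument is straightforward bookkeeping.
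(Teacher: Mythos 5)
Your proof is correct and follows essentially the same route as the paper's: both trace through the construction of $f$ and locate the run boundaries of $\sigma$ at the block minima, the paper by identifying the starting points of runs (and, for the converse, the right-to-left minima), you by counting descents and invoking runs $=$ descents $+\,1$. Your three-case check of consecutive pairs---in particular the verification that inter-block transitions are ascents because the blocks are ordered by their minima---makes explicit a point the paper's proof leaves implicit, but the underlying combinatorial content is the same.
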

\begin{proof}
For $n = 0$, $1$ is the only flattened partition corresponding to the empty set.
Consider a partition $P$ of $[n]$ having $(k-1)$ blocks of size greater than $1$. Using the construction of the map $f$, we write each of the blocks in such a way that the smallest elements of each block appear at the end. Thus the smallest elements become the starting points for a run in a flattened partition $\sigma = f(P)$ if the block has at least two elements as well as integer $1$. The element of singleton blocks becomes a right to left minimum in $\sigma$ and is not a starting point of a run. So $\sigma$ has $k$ runs.
	
Conversely, consider a flattened partition $\sigma$ having $k$ runs. Placing a mark $``|"$ after the right to left minima will form a partition of $[n+1]$. Since a starting point of a run is preceded by a greater element, except the integer $1$, the blocks with two or more elements consist of those containing a starting point. Deleting the integer $1$ and re-ordering the remaining blocks will give $(k-1)$ blocks of size greater than $1$.
\end{proof}

In Example \ref{far}, the partition $P$ has $2$ blocks of size greater than $1$ and the corresponding flattened partition, $\sigma$ has $3$ runs.

\section{Acknowledgment} 
We would like to thank the editor and the anonymous referee for their comments and useful suggestions. This work was financially supported by the Swedish Sida bilateral program with Makerere University, 2015-2020, project 316 ``Capacity building in Mathematics and its applications". The first author would like to thank in a special way Prof.\ Paul Vaderlind (Stockholm University), Prof.\ Roberto Mantaci (University Paris Diderot 7) and the D\'epartement de Math\'ematiques et Informatique, Universit\'e d' Antananarivo for hosting her during research visits.

\bigskip
\hrule
\bigskip
\noindent 2010 {\it Mathematics Subject Classification}:
Primary 05A05; Secondary 05A10, 05A15, 05A18. 
\noindent \emph{Keywords:} flattened partition, generating function, recurrence relation, run, set partition.
\bigskip
\hrule
\bigskip
\noindent (Concerned with sequence
\seqnum{A000295} and \seqnum{A124324}.)


\begin{thebibliography} {00}
\bibitem{fr}
F. Beyene and R. Mantaci, Exhaustive generation algorithm for flattened set partitions, preprint, 2020. Available at \url{https://cutt.ly/xorkr2M}.
\bibitem{bona2000combinatorial}
M. B{\'o}na and R. Ehrenborg, A combinatorial proof of the log-concavity of the number of permutations with k runs, \textit{J. Combin. Theory Ser. A} {\bf 90} (2000), 293--303.
\bibitem{callan2009pattern}
D. Callan, Pattern avoidance in ``flattened" partitions, \textit{Discrete Math.} {\bf 309} (2009), 4187--4191.
\bibitem{canfield2008counting}
E. R. Canfield and H.S. Wilf, Counting permutations by their alternating runs, \textit{J. Combin. Theory Ser. A} {\bf 115} (2008), 213--225.
\bibitem{carlitz1968generalized}
L. Carlitz, Generalized Stirling numbers, \textit{Combinatorial Analysis Notes, Duke University} (1968), 8--15.
\bibitem{ehrenborg2000excedance}
R. Ehrenborg and E. Steingrimsson, The excedance set of a permutation, \textit{Adv. Appl. Math.} {\bf 24} (2000), 284--299.
\bibitem{olivia}
D. Foata and M. P. Sch\"{u}tzenberger, \textit{Th\'eorie G\'eom\'etrique des Polyn\^omes Eul\'eriens}, Springer, 2006.
\bibitem{fhh}
A. Heinz, Personal communication to the second author.
\bibitem{ma2012explicit}
S. Ma, An explicit formula for the number of permutations with a given number of alternating runs, \textit{J. Combin. Theory Ser. A} {\bf 119} (2012), 1660--1664.
\bibitem{ma2013enumeration}
S. Ma, Enumeration of permutations by number of alternating runs, \textit{Discrete Math.} {\bf 313} (2013), 1816--1822.
\bibitem{macmahon2001combinatory}
P. A. MacMahon, \textit{Combinatorial Analysis}, Academic Press, 1960.
\bibitem{mansour2012combinatoric}
T. Mansour, \textit{Combinatorics of Set Partitions}, CRC Press, 2012.
\bibitem{mansour2015counting}
T. Mansour, M. Shattuck, and S. Wagner, Counting subwords in flattened partitions of sets, \textit{Discrete Math.} {\bf 338} (2015), 1989--2005.
\bibitem{mantaci2003exceedingly}
R. Mantaci and F. Rakotondrajao, Exceedingly deranging!, \textit{Adv. Appl. Math.} {\bf 30} (2003), 177--188.
\bibitem{mantaci}
R. Mantaci and F. Rakotondrajao, A permutation representation that knows what Eulerian means, \textit{Discrete Math. Theor. Comput. Sci.} {\bf 4} (2001), 101--108.
\bibitem{rota1964number}
G. Rota, The number of partitions of a set, \textit{Amer. Math. Monthly} {\bf 71} (1964), 498--504.
\bibitem{sloane2008line}
N. J. A. Sloane et al., The on-line encyclopedia of integer sequences,  Available at \url{https://oeis.org}, 2019.
\bibitem{con1}
J. Steinhardt, Permutations with ascending and descending blocks, \textit{Electron. J. Combin.} {\bf 17} (2010), \#R14.
\bibitem{stephen1999mathematica}
S. Wolfram, \textit{The Mathematica Book}, Cambridge University Press, 2000.
\bibitem{zhuang2016counting}
Y. Zhuang, Counting permutations by runs, \textit{J. Combin. Theory Ser. A} {\bf 142} (2016), 147--176.
\end{thebibliography}
\end{document}